\definecolor{dullmagenta}{rgb}{0.4,0,0.4}   
\definecolor{darkblue}{rgb}{0,0,0.4}
\definecolor{darkgreen}{rgb}{0,0.4,0}
\newtheorem{theorem}{Theorem}[section]
\newtheorem*{theorem*}{Theorem}
\newtheorem{lemma}[theorem]{Lemma}
\newtheorem*{lemma*}{Lemma}
\newtheorem{proposition}[theorem]{Proposition}
\theoremstyle{definition}
\theoremstyle{remark}
\newtheorem{question*}[theorem]{Question}
\numberwithin{equation}{section}
\def\XXint#1#2#3{{\setbox0=\hbox{$#1{#2#3}{\int}$}
     \vcenter{\hbox{$#2#3$}}\kern-.5\wd0}}
\begin{document}

\newcommand{\todo}[1]{{\bf \color{red} [TODO: #1]}}

\title{On the embedding of $A_1$ into $A_\infty$}

\author{Guillermo Rey}
\address{Department of Mathematics, Michigan State University, East Lansing MI 48824-1027}
\email{reyguill@math.msu.edu}
\thanks{}

\begin{abstract}
    We give a quantitative embedding of the Muckenhoupt class $A_1$ into $A_\infty$. In particular, we show how
    $\epsilon$ depends on $[w]_{A_1}$ in the inequality which characterizes $A_\infty$ weights:
    \[
      \frac{w(E)}{w(Q)} \leq \biggl( \frac{|E|}{|Q|} \biggr)^\epsilon,
    \]
    where $Q$ is any dyadic cube and $E$ is any subset of $Q$. This embedding yields a sharp reverse-H\"older inequality
    as an easy corollary.
\end{abstract}

\maketitle

\section{Introduction}

The purpose of this article is to give a quantitative version of the classical embedding between Muckenhoupt classes
\begin{equation} \label{Embedding}
    A_1 \hookrightarrow A_{\infty}.
\end{equation}

The class $A_1$ is defined to be all weights $w \geq 0$ for which $Mw \leq Cw$ for some $C$, where
\[
    Mf(x) = \sup_{P \ni x} \frac{1}{|P|}\int_P |f(y)| \, dy
\]
is the uncentered Hardy-Littlewood maximal operator (here the supremum is taken over cubes with sides parallel to the coordinate axes).

The class $A_\infty$ is defined to be all weights $w \geq 0$ for which there exists a constant $C$ and an exponent $\epsilon > 0$ such that
\[
    \frac{w(E)}{w(P)} \leq C \biggl( \frac{|E|}{|P|} \biggr)^{\epsilon}
\]
for all cubes $P$ and all subsets $E \subseteq P$. Another common way to define this class is to introduce the so-called Fujii-Wilson
$A_\infty$ characteristic:
\[
  [w]_{A_\infty} := \sup_Q \frac{1}{w(Q)} \int_Q M(w\mathbbm{1}_Q) \, dx,
\]
where the supremum ranges over cubes with sides parallel to the axes. The class of $A_\infty$ weights is the collection of all weights for which
$[w]_{A_\infty}$ is finite.

These two definitions can be shown to be equivalent. In particular, one can give a quantitative version of the first:
\[
  [w]_{A_\infty'} := \inf\Biggl\{ a>0:\,  
    \frac{w(E)}{w(P)} \leq C \biggl( \frac{|E|}{|P|} \biggr)^{1/a}
    \quad \text{ for all cubes $P$ and all measurable subsets $E \subseteq P$}
\Biggr\}.
\]

With these definitions one can show that $[w]_{A_\infty} \sim_d [w]_{A_\infty'}$. The easy direction is $[w]_{A_\infty} \lesssim_d [w]_{A_\infty'}$,
to prove the reverse inequality one can use the sharp reverse-H\"older estimate found in \cite{Hytonen2012}.

We are interested in this form of the $A_\infty$ characteristic because it is the one which is used some recent proofs
of the weighted weak-type inequality for Calder\'on-Zygmund operators (see \cite{DomingoSalazar2015}), so it may yield some insights
into the sharpness of such estimate. In fact, in the Bellman-function approach to the sharpness of this weak-type estimate, one has a very similar
problem but with one extra difficulty; the problem treated in this article is the one which appears if this extra difficulty
is removed, see \cite{Nazarov2015} for more details.

%
%
%

It is a well-known fact that every weight in $A_1$ is also in $A_\infty$; here we give a quantitative version of this embedding.

We will actually work with a wider class of weights, the dyadic $A_p$ weights.
To state the result, let us fix a way to quantify exactly how a weight lies in dyadic $A_1$. Let $P$ be a cube in $\mathbb{R}^d$, we define the $A_1^d(P)$ characteristic of a weight $w \geq 0$ to be
\[
    [w]_{A_1^d(P)} := \operatorname{ess\,sup}_{x \in P} \frac{M^{\text{dyadic}}_P w(x)}{w(x)},
\]
where $M^{\text{dyadic}}_P$ is the dyadic maximal operator localized to $P$:
\[
    M^d_P f(x) = \sup_{R \in \mathcal{D}(P)} \langle |f| \rangle_R \mathbbm{1}_R(x).
\]
Here we are denoting by $\mathcal{D}(P)$ the collection of all dyadic subcubes of $P$, and the average of a function $f$ over a set $E$ by
\[
    \langle f \rangle_E := \frac{1}{|E|} \int_E f(x) \, dx.
\]
Also, we denote the characteristic function of a set $E$ by $\mathbbm{1}_E$.

We define the (non-dyadic) $A_1$ characteristic similarly:
\[
    [w]_{A_1(P)} := \operatorname{ess\,sup}_{x \in P} \frac{M_P w(x)}{w(x)},
\]
where $M_P$ is the uncentered Hardy-Littlewood maximal operator where the cubes are constrained to lie inside $P$.

The classical way to prove \eqref{Embedding} proceeds by using the \emph{reverse H\"older inequality} of Coifman-Fefferman \cite{Coifman1974} (see \cite{Hytonen2012}
for a recent sharp reverse H\"older inequality valid in a very general context): for any weight $w \in A_p$ we have
\[
    \langle w^q \rangle_P \leq C \langle w \rangle_P^q,
\]
for some exponent $q > 1$ depending on $w$. Indeed, let $C_{\text{RH}}$ be the best constant in the above inequality (which will depend on $q$ and on how $w$ lies in $A_p$), then:
\begin{align*}
    w(E) &= \int_P w \mathbbm{1}_E \\
         &\leq \Bigl( \int_P w^q \Bigr)^{1/q} |E|^{1/q'} \\
         &\leq C_{\text{RH}}^{1/q} w(P) \Bigl( \frac{|E|}{|P|} \Bigr)^{1/q'}.
\end{align*}
For (non-dyadic) $A_1$ weights the most quantitative version of the reverse H\"older inequality was given by \cite{Vasyunin2003} in dimension one. Using the results of \cite{Vasyunin2003} one obtains
\[
  \frac{w(E)}{w(P)} \leq \frac{a}{a-1} \Bigl( \frac{|E|}{|P|} \Bigr)^{\frac{1}{a[w]_{A_1(P)}}}
\]
for all $a > 1$, so one can get arbitrarily close to the exponent $\frac{1}{[w]_{A_1}}$ at the cost of a multiplicative constant. The results in \cite{Vasyunin2003} are, however, valid only for non-dyadic $A_p$ weights,
which behave much better in terms of sharp constants; also \cite{Vasyunin2003} is valid only in dimension $1$.

In \cite{Melas2005a} A. Melas showed that, for dyadic $A_1$ weights, one has
\[
    \Bigl\langle (M^{\text{dyadic}}w)^p \Bigr\rangle_P \leq C(p,[w]_{A_1^d}) \langle w \rangle_P^p,
\]
for all $p$ such that
\[
    1 \leq p < \frac{\log(2^d)}{\log\Bigl( 2^d - \frac{2^d-1}{[w]_{A_1^d}} \Bigr)},
\]
and where $C(p,[w]_{A_1^d})$ is a constant which blows-up as $p$ tends to the endpoint above.

Following the same steps as before, this implies an inequality of the form
\[
    \frac{w(E)}{w(P)} \leq C_{\epsilon} \Bigl( \frac{|E|}{|P|} \Bigr)^{\epsilon}
\]
for all $\epsilon$ such that
\[
    0 \leq \epsilon < -\frac{\log\Bigl( 1 - \frac{2^d-1}{2^d [w]_{A_1^d}} \Bigr)}{d\log 2} := \epsilon([w]_{A_1^d},d),
\]
and where $C_\epsilon$ is a constant which blows-up as $\epsilon$ tends to the endpoint $\epsilon([w]_{A_1^d},d)$.

It was of interest whether one could achieve an estimate with the endpoint $\epsilon([w]_{A_1^d},d)$,
and this was answered positively by A. Os\c{e}kowski in \cite{Oscekowski2013}, where he proved the following weak-type estimate:
\begin{equation}\label{Osc}
    \frac{1}{|P|}\Bigl| \Bigl\{ x \in P: \, M^{\text{dyadic}} w(x) > 1 \Bigr\} \Bigr| \leq \langle w \rangle_P^p
\end{equation}
for all $p$ such that
\[
    1 \leq p \leq \frac{\log(2^d)}{\log\Bigl( 2^d - \frac{2^d-1}{[w]_{A_1^d}} \Bigr)}.
\]
This estimate, coupled with H\"older's inequality for Lorentz spaces yields
\[
    \frac{w(E)}{w(P)} \leq C_{\epsilon(Q,d)}\Bigl( \frac{|E|}{|P|} \Bigr)^{\epsilon(Q,d)}
\]
for all weights $w$ with $[w]_{A_1^d} \leq Q$,
thus settling the endpoint question of whether a decay rate of $(|E|/|P|)^{\epsilon(Q,d)}$ could be achieved. However, note that H\"older's inequality for Lorentz spaces (when used in this way) has a constant which explodes
when $p \to 1$ which in this case implies that the constant $C_{\epsilon(Q,d)}$ will blow-up as $Q \to \infty$.

In this article we improve this conclusion by directly computing the function
\[
    \mathbb{B}(x,y,m) = \sup \frac{w(E)}{|P|},
\]
where the supremum is taken over all sets $E \subseteq P$ with $|E|/|P| = x$,
and all dyadic $A_1$ weights $w$ with $[w]_{A_1^d(P)} \leq Q$, $\langle w \rangle_P = y$
and $\operatorname{ess\,inf}_{z \in P}w(z) = m$.

This is what is commonly called the \textit{Bellman function} associated with the problem. It is an extremal object which
controls the way in which the parameters evolve when ``concatenating'' several weights and sets together.

We can already give an upper bound for $\mathbb{B}(\cdot, Q,1)$:
\begin{equation}\label{SmoothEstimate}
    \mathbb{B}(x,Q,1) \leq \widetilde{f}(x) := Qx^{\epsilon(Q,d)}.
\end{equation}
This shows that the decay rate deduced from Os\c{e}kowski's estimate can be achieved with a uniform constant as $Q \to \infty$ (note that the constant $Q$ cancels when estimating $\frac{w(E)}{w(P)}$).
Observe also that this recovers the result of Os\c{e}kowski when one takes $w$ instead of its maximal function in \eqref{Osc}, which can be interpreted as a
weak-type reverse H\"older inequality. Indeed, assume without loss of generality that $|P| = \operatorname{ess\,inf} w = 1$ and
let $E_\lambda = \{x \in P: w(x) > \lambda\}$, then our estimate will show (see \eqref{DefOfM}) that
\[
    w(E_\lambda) \leq Q \Bigl( \frac{w(P)-1}{Q-1} \Bigr) \Bigl( |E_\lambda| \frac{Q-1}{w(P)-1} \Bigr)^{\epsilon(Q,d)}.
\]
So integrating $w$ over this set yields
\[
    \lambda |E_\lambda|^{1-\epsilon(Q,d)} \leq (\langle w \rangle_P -1)^{1-\epsilon(Q,d)} \Bigl( \frac{Q}{(Q-1)^{\epsilon(Q,d)}} \Bigr) \leq \langle w \rangle_P.
\]
Or, in other words,
\[
    \|w\|_{L^{p,\infty}} \leq \int_P w(x) \, dx
\]
for the same $p$'s as in \eqref{Osc}.

However, the function $\mathbb{B}(\cdot, Q,1)$ is, surprisingly, slightly better. Indeed if we define
$f(x) = \mathbb{B}(x,Q,1)$,
then our main result shows that $f$ is the piecewise-linear interpolation
of the function $\widetilde{f}$ evaluated at the points $2^{-dk}$ for $k \in \mathbb{N}$.
\begin{figure}[H]
    \caption{Plots of $f$ and $\widetilde{f}$}
    \label{Figure:Difference}
    \includegraphics[scale=0.8]{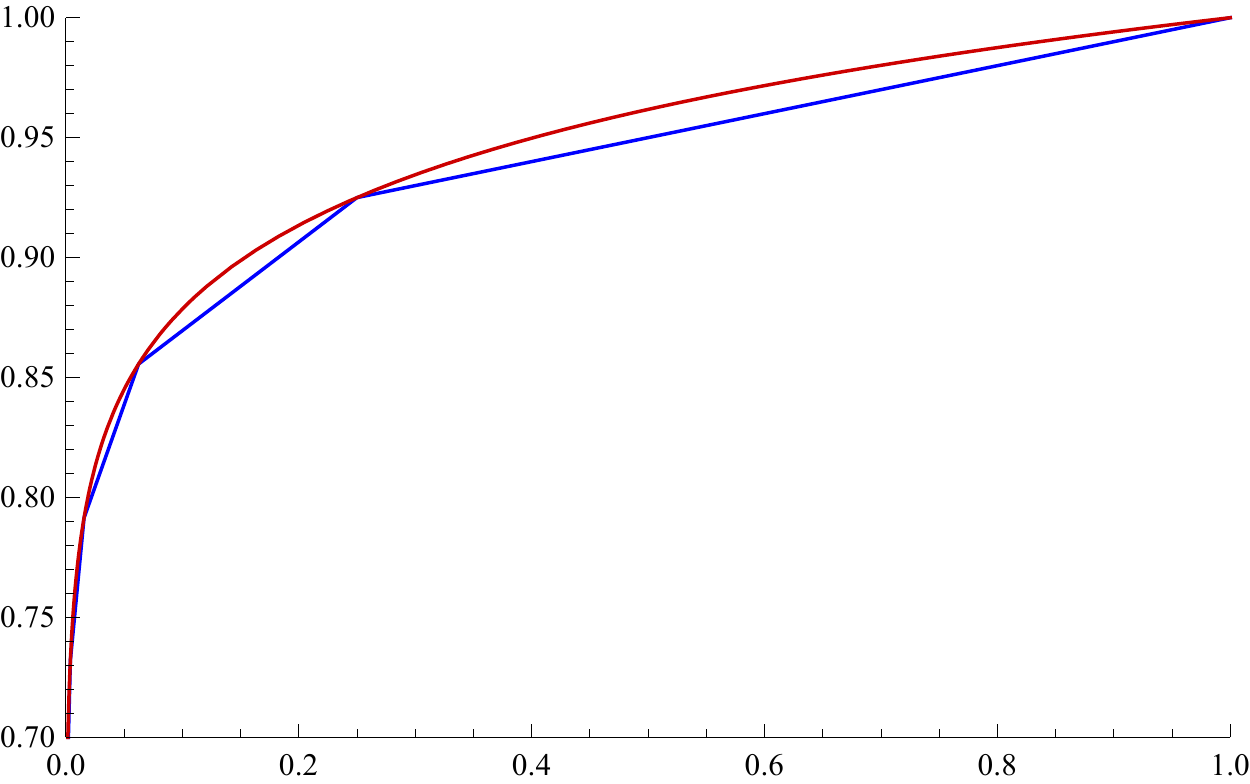}
\end{figure}

In \autoref{Figure:Difference} we show a normalized section of the plot (the values are divided by $Q$) of the functions $f$ and $\widetilde{f}$ with $Q = 10$ and in dimension two.

The precise form of $\mathbb{B}$ is given in the following theorem, which is the main result of this article.
\begin{theorem}
  The function $\mathbb{B}$ defined above is has the form
  \[
    \mathbb{B}(x,y,m) = m\cdot
\begin{cases}
        x+y/m-1 &\text{if }y/m \leq 1+(Q-1)x \\
        \frac{y/m-1}{Q-1}f\Bigl( x \frac{Q-1}{y/m-1} \Bigr) &\text{if } y/m \geq 1 + (Q-1)x
    \end{cases} \Biggr\}.
  \]
\end{theorem}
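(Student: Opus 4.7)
The plan is the standard Bellman--function approach: use homogeneity to reduce to a one-parameter function, construct explicit extremizers for the lower bound, and verify a supersolution inequality for the upper bound. Since $w \mapsto w/m$ identifies admissible triples for $(x,y,m)$ with those for $(x, y/m, 1)$ and scales the objective $w(E)/|P|$ by $m$, we get $\mathbb{B}(x, y, m) = m\, B(x, y/m)$ where $B(x, t) := \mathbb{B}(x, t, 1)$ is defined on $\{0 \le x \le 1,\ 1 \le t \le Q\}$. Moreover, writing $\alpha := (t - 1)/(Q - 1)$ and $\xi := x(Q-1)/(t-1)$, a mixing construction, in which fraction $\alpha$ of $P$ carries the extremizer for $(\xi, Q, 1)$ and fraction $1 - \alpha$ carries $w \equiv 1$ with no set, suggests $B(x, t) = \alpha f(\xi) = \frac{t-1}{Q-1} f\bigl(x\frac{Q-1}{t-1}\bigr)$ in the non-linear regime, with $f := B(\cdot, Q)$. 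This reduces the theorem to identifying $f$ as the piecewise-linear interpolant of $\widetilde f$ at the nodes $2^{-dk}$.

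\emph{Dynamic programming and extremizers.} Restricting an admissible $(w, E)$ on $P$ to its $2^d$ dyadic children yields child parameters $(x_i, y_i, m_i)$ satisfying $\sum_i x_i = 2^d x$, $\sum_i y_i = 2^d y$, $m_i \ge m$, $y_i \le Q m_i$, and $\min_i m_i = m$; any such tuple is realised by gluing sub-extremizers, so
\[
  \mathbb{B}(x, y, m) = \sup\Bigl\{ 2^{-d} \sum_i \mathbb{B}(x_i, y_i, m_i) : (x_i, y_i, m_i) \text{ admissible}\Bigr\}.
\]
The linear-regime extremizer is the two-level weight $w = (m + (y - m)/x)\mathbbm{1}_E + m \mathbbm{1}_{P \setminus E}$, for which $m + (y - m)/x \le Qm$ by hypothesis and $w(E)/|P| = m(x + y/m - 1)$. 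To reach $f(2^{-dk})$ at the non-linear vertex $(2^{-dk}, Q, 1)$, split $P$ into its $2^d$ children, stuff $E$ entirely into a single heavy child, and set $w \equiv 1$ on the remaining $2^d - 1$ light ones. The heavy child carries parameters $(2^{-d(k-1)},\, 2^d Q - (2^d - 1),\, 2^d - (2^d - 1)/Q)$, which after rescaling by its infimum is again of the form $(2^{-d(k-1)}, Q, 1)$; iterating $k$ times yields $f(2^{-dk}) \ge Q\bigl(1 - (2^d - 1)/(2^d Q)\bigr)^k = \widetilde f(2^{-dk})$. For $x$ strictly between two adjacent dyadic nodes, distributing the two neighbouring-node extremizers across the $2^d$ children of $P$ realises every convex combination of their endpoint values, giving the piecewise-linear interpolant from below.

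\emph{Upper bound and main obstacle.} Let $B^\sharp$ denote the candidate formula. The upper bound $\mathbb{B} \le B^\sharp$ reduces to verifying the supersolution inequality
\[
    B^\sharp(x, y, m) \ge 2^{-d} \sum_i B^\sharp(x_i, y_i, m_i)
\]
for every admissible tuple; iterating the Bellman identity then propagates the inequality from the leaves (where $B^\sharp(x, m, m) = xm = \mathbb{B}(x, m, m)$) up to the root. This supersolution check is the main technical obstacle: because $f$ is only piecewise linear, the verification splits into several cases according to which of the two regimes each child $(x_i, y_i/m_i)$ sits in and to which linear piece of $f$ each rescaled abscissa $x_i(Q - 1)/(y_i/m_i - 1)$ belongs, with the delicate case being a mixture of linear- and non-linear-regime children. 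The self-similarity $\widetilde f(2^{-d}\xi) = \bigl(1 - (2^d - 1)/(2^d Q)\bigr)\widetilde f(\xi)$ is precisely what makes the inequality tight along the extremizer trajectories constructed above, and this rigidity is what pins $f$ down as the claimed piecewise-linear interpolant rather than any smaller candidate.
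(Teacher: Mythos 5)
Your overall plan (homogeneity reduction to $m=1$, explicit extremizers for the lower bound, a supersolution/Main Inequality check for the upper bound) is the same architecture as the paper's proof, and your extremizer construction for the nodes $2^{-dk}$ and the piecewise-linear interpolation matches the paper's Section~\ref{BellmanFunctionLowerBound}. But the upper bound -- which you yourself flag as ``the main technical obstacle'' -- is never actually established. You describe what would have to be checked (case analysis over which regime each child lies in, which linear piece of $f$ each rescaled abscissa hits) and appeal to a self-similarity heuristic, but you do not carry out the verification. That verification is where all the genuine work of the theorem lives, so as written this is not a proof of the upper bound; it is a restatement of the problem.

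There are two specific ideas from the paper you would need and do not have. First, the $N$-term main inequality
\[
    B(x,y,1) \geq \frac{1}{N}\Bigl( \sum_{i=1}^k B(x_i,y_i,1) + \sum_{i=k+1}^N \tfrac{y_i}{Q}B(x_i,Q, 1)\Bigr)
\]
must be reduced to the two-point ($k=N-1$) case before the case analysis becomes tractable; the paper does this by a separate convexity/extreme-point argument (the ``$b_i$'' lemma at the end of Section~\ref{BellmanFunctionSetting}). Without that reduction your ``several cases'' genuinely do not close, since $f$ has infinitely many kinks and a generic $N$-tuple can touch arbitrarily many of them. Second, even in the two-point case, the paper does not check the inequality for $M$ directly; it replaces $M$ by a supporting wedge $M_k$ (a two-plane majorant of $M$ agreeing with it on $\Omega_k\setminus\Omega_{k-1}$ and $\Omega_{k-1}$), proves a strictly stronger inequality \eqref{StrongerMI} for $M_k$, and then uses $M_k\geq M$ to conclude. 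The monotonicity-in-$\widetilde x$ calculation that lets one push $(\widehat x,Q)$ into $\Omega_k$ is the step that collapses the case analysis to a single plane computation; your proposal contains nothing playing this role. Your self-similarity observation explains why the inequality is tight along the constructed trajectories, which tells you the candidate is not too small, but does not show it is not too big.

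Finally, a smaller point: you assert equality in the dynamic-programming identity $\mathbb{B}(x,y,m)=\sup\{\ldots\}$ rather than just the $\geq$ direction. The paper only uses (and only needs) the $\geq$ direction as a necessary inequality, because the upper bound comes from exhibiting a supersolution; equality in the Bellman recursion is not required and would need a separate argument about attainment of the supremum.
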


\subsection{Organization}
The article is organized as follows: in section \ref{BellmanFunctionSetting} we cast the problem as one of finding a certain Bellman function, then in section \ref{BellmanFunctionLowerBound} we give
a lower bound for the Bellman function; we also describe the structure of the maximizers. In section \ref{CandidateBellmanFunctionSatisfiesMI} we show that the lower bound found in the previous section
is also an upper bound, hence showing that the function found is the actual Bellman function.

\section{Acknowledgements}
I wish to thank Professor Alexander Volberg for many invaluable discussions regarding
Bellman functions. I also am greatly indebted to Professor Ignacio Uriarte-Tuero
without whom this project could not have happened. I also benefited very much from discussions
about this result with Professors David Cruz-Uribe, Leonid Slavin, and Vasily Vasyunin.
Finally, I would like to thank Professor Cristina Pereyra for organizing the New Mexico
Analysis Seminar, which provided the perfect environment for many stimulating
discussions related to this work.

\section{The Bellman function approach} \label{BellmanFunctionSetting}
Define, as in the introduction, the function
\[
    \mathbb{B}_P(x,y,m) = \sup\Bigl\{ \frac{w(E)}{|P|}: E \subseteq P,\, [w]_{A^d_1(P)} \leq Q \text{ such that } |E| = x|P|,\, \langle w \rangle_P = y,\, m = \operatorname{ess\,inf} w \Bigr\}.
\]
By translation and dilation invariance, the function $\mathbb{B}_P$ is independent of $P$, so we suppress the index $P$ from $\mathbb{B}$ from now on.

The domain, which will be denoted by $\Omega_{\mathbb{B}}$ is:
\begin{align*}
    0 &\leq x \leq 1 \\
    0 < m &\leq y \leq Qm.
\end{align*}

In this section we cast finding $\mathbb{B}$ as a minimization problem. We will follow the Bellman function method, see for example \cite{Nazarov1999},
\cite{Vasyunin2003} or \cite{Slavin2008}, and \cite{Oscekowski2013} for an approach closer to ours.

The function $\mathbb{B}$ satisfies the following \emph{Main Inequality}
\begin{equation} \label{MainInequalityForB}
    \mathbb{B}(x,y,m) \geq \Bigl\langle \mathbb{B}(x_i,y_i,m_i) \Bigr\rangle,
\end{equation}
where $\langle x_i \rangle = x$, $\langle y_i \rangle = y$, $\min m_i = m$, $(x_i, y_i, m_i) \in \Omega$, and $(x,y,m) \in \Omega$.
In inequality \eqref{MainInequalityForB}, and for the rest of the article, we use the notation
\[
    \langle \xi_i \rangle := \frac{1}{n} \sum_{i=1}^n \xi_i,
\]
whenever $\{\xi_i\}$ is a discrete sequence of $n$ numbers; usually $n$ will be obvious from the context so we will omit its dependence.

We can see \eqref{MainInequalityForB} by combining almost-extremizers defined on the first-generation dyadic subcubes of $P$ into one on the whole cube $P$.

We also have the \emph{obstacle condition}
\[
    \mathbb{B}(1,y,y) = y,
\]
which is just the observation that if $E = P$ almost everywhere, then $\langle \mathbbm{1}_E w \rangle_P = \langle w \rangle_P$.

From the definition of $\mathbb{B}$ we have the homogeneity property
\begin{equation} \label{HomogeneityForB}
    \mathbb{B}(x,\lambda y, \lambda m) = \lambda \mathbb{B}(x,y,m).
\end{equation}

If we find a nonnegative function $B$ defined in $\Omega_\mathbb{B}$ and which satisfies the main inequality and the obstacle condition above,
then $\mathbb{B} \leq B$. This is a typical fact whose proof we omit, but the reader can consult \cite{Oscekowski2013} for a proof in a similar case.

The homogeneity condition will let us assume that $m = 1$ in \eqref{MainInequalityForB}:
\begin{proposition}
    If a function $B$ defined on $\Omega_{\mathbb{B}}$ satisfies the main inequality \eqref{MainInequalityForB} with $m = 1$ and the homogeneity property \eqref{HomogeneityForB},
    then it must also satisfy the main inequality for all $m > 0$.
\end{proposition}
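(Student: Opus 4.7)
The plan is to reduce the general-$m$ case to the assumed $m=1$ case by rescaling all the ``$m$-coordinates'' (namely $m$, $y$, and each $m_i$, $y_i$) by the common factor $1/m$. Under \eqref{HomogeneityForB} this multiplies both sides of \eqref{MainInequalityForB} by the same factor $m$, and it turns the constraint $\min m_i = m$ into $\min(m_i/m) = 1$, placing us exactly in the hypothesis.

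Concretely, fix $(x,y,m) \in \Omega_{\mathbb{B}}$ together with an admissible subconfiguration $(x_i,y_i,m_i)$ satisfying $\langle x_i \rangle = x$, $\langle y_i \rangle = y$, $\min m_i = m$. Applying \eqref{HomogeneityForB} with $\lambda = m$ to each triple gives
\[
    B(x,y,m) = m\, B(x, y/m, 1), \qquad B(x_i,y_i,m_i) = m\, B(x_i, y_i/m, m_i/m),
\]
so after averaging the second identity and dividing by $m > 0$, the inequality \eqref{MainInequalityForB} at $(x,y,m)$ is equivalent to
\[
    B(x, y/m, 1) \geq \langle B(x_i, y_i/m, m_i/m) \rangle.
\]
This is exactly the main inequality with ``$m=1$'' applied to the rescaled subconfiguration, since $\langle x_i \rangle = x$, $\langle y_i/m \rangle = y/m$, and $\min(m_i/m) = 1$.

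The remaining step is to verify that the rescaled triples still lie in $\Omega_{\mathbb{B}}$. Since $(x_i,y_i,m_i) \in \Omega_{\mathbb{B}}$ means $0 < m_i \leq y_i \leq Qm_i$, dividing by $m>0$ preserves these inequalities, giving $(x_i, y_i/m, m_i/m) \in \Omega_{\mathbb{B}}$; likewise $m \leq y \leq Qm$ yields $1 \leq y/m \leq Q$, so $(x, y/m, 1) \in \Omega_{\mathbb{B}}$. There is no substantive obstacle in the argument: it is pure bookkeeping, and the only care required is in confirming that the rescaling preserves both the averaging constraints and the admissible domain.
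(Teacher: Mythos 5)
Your proof is correct and is essentially the same argument the paper gives, just spelled out in detail: the paper's one-line proof (``the domain of $B$ is invariant under simultaneous dilations of the variables $y$ and $m$'') is precisely your rescaling by $1/m$ combined with the homogeneity identity.
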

\begin{proof}
    This is just the observation that the domain of $B$ is invariant under simultaneous dilations of the variables $y$ and $m$.
\end{proof}

We want to find a set of necessary and sufficient conditions for $B$ to satisfy the main inequality, but which are simpler to verify. To this end, let us first prove \emph{necessary} conditions
that any such $B$ must satisfy.

The following Lemma is simple but important in what follows. It tells us that, in order to exploit \eqref{MainInequalityForB}, we should strive to minimize the variables $m_i$ as much as possible.
We will let $N := 2^d$ for the rest of the article.
\begin{lemma}
    Any function $B$ satisfying \eqref{MainInequalityForB} is decreasing in $m$. More precisely: assume $(x,y,m_1)$ and $(x,y,m_2)$ are two points in $\Omega_{\mathbb{B}}$ with $m_1 \leq m_2$, then
    \begin{equation} \label{BIsDecreasingInM}
        B(x,y,m_1) \geq B(x,y,m_2).
    \end{equation}
\end{lemma}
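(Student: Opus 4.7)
The plan is to apply the Main Inequality \eqref{MainInequalityForB} to a carefully chosen trivial split that compares the two parameter values. The key observation is that on the right-hand side of \eqref{MainInequalityForB} we are constrained to have $\min m_i$ equal to the $m$-variable of the left-hand side, so by placing a single child at $m_1$ and the rest at $m_2$ we can force the left-hand side to be evaluated at $m_1$ while mostly averaging over $m_2$.

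Concretely, I would split into $N = 2^d$ children by setting $(x_i, y_i, m_i^*) = (x, y, m_1)$ for $i = 1$ and $(x_i, y_i, m_i^*) = (x, y, m_2)$ for $i = 2, \dots, N$. Then one checks the bookkeeping: $\langle x_i \rangle = x$, $\langle y_i \rangle = y$, and $\min_i m_i^* = m_1$ since $m_1 \le m_2$. Both points $(x,y,m_1)$ and $(x,y,m_2)$ are in $\Omega_{\mathbb{B}}$ by hypothesis, so this is an admissible split. Plugging into \eqref{MainInequalityForB} yields
\[
B(x,y,m_1) \;\geq\; \frac{1}{N}\Bigl( B(x,y,m_1) + (N-1)\,B(x,y,m_2) \Bigr),
\]
and rearranging gives $(N-1)\,B(x,y,m_1) \geq (N-1)\,B(x,y,m_2)$, which is \eqref{BIsDecreasingInM}.

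There is no real obstacle here: the only thing to verify is that the chosen child points lie in $\Omega_{\mathbb{B}}$, which is automatic from the assumption that $(x,y,m_1)$ and $(x,y,m_2)$ are both in the domain. The content of the lemma is really just that the constraint $\min m_i = m$ allows us to "pull $m$ down for free" without having to compensate in the other variables, which is the heuristic motivation the author gives for why one wants to minimize the $m_i$'s when exploiting the Main Inequality.
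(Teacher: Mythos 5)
Your proposal is correct and is essentially identical to the paper's own proof: the paper sets $x_i = x$, $y_i = y$ for all $i$, puts $\widetilde{m}_1 = m_1$ and $\widetilde{m}_i = m_2$ for $i > 1$, applies \eqref{MainInequalityForB}, and rearranges exactly as you do. Nothing to add.
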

\begin{proof}
    Let $x_i = x$ and $y_i =y$ for all $1 \leq i \leq 2^d := N$. Also, let
    \[
        \widetilde{m}_i =
            \begin{cases}
                m_1 &\text{if } i = 1 \\
                m_2 &\text{if } i > 1.
            \end{cases}
    \]
    Then the points $(x_i,y_i,\widetilde{m}_i)$ are all in $\Omega$. Also, $\langle x_i \rangle = x$ and $\langle y_i \rangle = y$. Since $m_1 \leq m_2$ we also have that $\min( \widetilde{m}_i ) = m_1$, so
    using \eqref{MainInequalityForB}:
    \begin{align*}
        B(x,y,m_1) \geq \frac{1}{N}B(x,y,m_1) + \frac{N-1}{N} B(x,y,m_2),
    \end{align*}
    which after rearranging yields \eqref{BIsDecreasingInM}.
\end{proof}

The following Lemma follows directly from the main inequality \eqref{MainInequalityForB}.
\begin{lemma}
    For any fixed $m > 0$, the function $(x,y) \mapsto B(x,y,m)$ is concave.
\end{lemma}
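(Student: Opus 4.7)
The plan is to exploit the Main Inequality \eqref{MainInequalityForB} by distributing mass evenly across the $N$ slots while keeping the $m$-variable constant, which is legal because the constraint on the minimum of the $m_i$'s is trivially satisfied if every $m_i$ equals the given $m$. Concretely, I would fix $m>0$, pick two admissible points $(x^{(1)},y^{(1)},m)$ and $(x^{(2)},y^{(2)},m)$ in $\Omega_\mathbb{B}$, and an integer $k$ with $0\leq k\leq N$. Then I set
\[
(x_i,y_i,m_i) = \begin{cases} (x^{(1)},y^{(1)},m) & 1\leq i\leq k,\\ (x^{(2)},y^{(2)},m) & k+1\leq i\leq N.\end{cases}
\]
These points lie in $\Omega_\mathbb{B}$ because the domain in $(x,y)$ with $m$ fixed is convex (the constraints $0\leq x\leq 1$ and $m\leq y\leq Qm$ are preserved componentwise), $\langle x_i\rangle = \tfrac{k}{N}x^{(1)} + \tfrac{N-k}{N}x^{(2)}$, $\langle y_i\rangle = \tfrac{k}{N}y^{(1)} + \tfrac{N-k}{N}y^{(2)}$, and $\min m_i=m$. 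Plugging into \eqref{MainInequalityForB} yields concavity at the rational weight $\lambda = k/N$:
\[
B\bigl(\lambda x^{(1)}+(1-\lambda)x^{(2)},\lambda y^{(1)}+(1-\lambda)y^{(2)},m\bigr) \geq \lambda B(x^{(1)},y^{(1)},m) + (1-\lambda) B(x^{(2)},y^{(2)},m).
\]

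Next I would iterate the construction to upgrade from rationals of denominator $N$ to all $N$-adic rationals, which form a dense subset of $[0,1]$. At the second iteration one nests the same recipe inside each of the $N$ slots, obtaining denominators $N^2$; continuing yields density. Since the inequality above is a midpoint-type concavity statement (for $k=N/2$, noting $N=2^d$ is even) already, one can alternatively just use bisection.

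Finally, to pass from dense concavity to honest concavity, I would invoke a mild regularity property of $B$: either one appeals to continuity of $B$ on the interior of $\Omega_\mathbb{B}$, or one uses that $B$ is bounded above on compact subsets of $\Omega_\mathbb{B}$ so that midpoint concavity combined with local boundedness yields continuity and then concavity (the standard Sierpi\'nski-type argument). In the present setting $B=\mathbb{B}$ is majorized by the explicit candidate $\widetilde f(x)=Qx^{\epsilon(Q,d)}$ from \eqref{SmoothEstimate}, so local boundedness is immediate.

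The main obstacle is really only the last step: the Main Inequality is a discrete averaging statement, so it natively produces only $N$-adic concavity, and one needs either an a priori continuity statement or a Sierpi\'nski-style argument to promote this to genuine concavity in the continuum sense. Everything else is a direct substitution into \eqref{MainInequalityForB}.
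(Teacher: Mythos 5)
Your proof is correct and uses the paper's approach exactly: apply the Main Inequality \eqref{MainInequalityForB} with all $m_i = m$ to reduce to an $N$-point Jensen inequality on the slice, giving $N$-adic (midpoint) concavity, and then promote this to genuine concavity via a regularity argument --- a step the paper's one-line proof elides and which you rightly identify as the only real content here. One caution: invoking \eqref{SmoothEstimate} to supply local boundedness is circular, since that estimate sits downstream of this lemma; instead use the trivial bound $\mathbb{B}(x,y,m) \leq \langle w \rangle_P = y$ (or, for a general candidate $B$, note that local boundedness is part of what one assumes when positing such a $B$), which gives the boundedness needed for the Sierpi\'nski-type argument without any circularity.
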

\begin{proof}
    This is just the observation that the domain $\Omega$ is convex, together with \eqref{MainInequalityForB} with $m_i = m$.
\end{proof}

Now we are able to make the first reduction in \eqref{MainInequalityForB} (after the trivial one of setting $m=1$):
\begin{proposition} \label{MainInequalityUnderHomogeneity}
    Suppose $B$ is a nonnegative function defined in $\Omega_{\mathbb{B}}$ and which satisfies the obstacle condition, \eqref{HomogeneityForB}, and \eqref{BIsDecreasingInM}. If $B$ satisfies
    \begin{equation} \label{MainInequalityForBv2}
        B(x,y,1) \geq \Bigl\langle B\Bigl(x_i, y_i, \max\Bigl(1, \frac{y_i}{Q} \Bigr) \Bigr) \Bigr\rangle
    \end{equation}
    for all $N$-tuples of points $(x_i, y_i)$ satisfying
    \begin{align}
        0 &\leq x_i \leq 1, \quad \text{and} \quad \langle x_i \rangle = x, \\
        1 &\leq y_i, \quad \min(y_i) \leq Q, \quad \text{and} \quad \langle y_i \rangle = y,
    \end{align}
    then we must have that $B = \mathbb{B}$.
\end{proposition}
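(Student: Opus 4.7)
The plan is to show that under the given hypotheses $B$ satisfies the full main inequality \eqref{MainInequalityForB}; from this, $B \geq \mathbb{B}$ follows by the standard Bellman-function argument referred to earlier in the text (cf.\ \cite{Oscekowski2013}). The reverse bound $B \leq \mathbb{B}$ implicit in the statement ``$B = \mathbb{B}$'' would then come from the explicit extremizers produced in the later sections; so the sole content of the proposition is the promotion of \eqref{MainInequalityForBv2} to \eqref{MainInequalityForB}.

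First, I would use the preceding proposition to reduce \eqref{MainInequalityForB} to the case $m=1$: by the homogeneity \eqref{HomogeneityForB} this is automatic. So fix a base point $(x,y,1) \in \Omega_\mathbb{B}$ and an arbitrary admissible tuple $(x_i,y_i,m_i)_{i=1}^N \subset \Omega_\mathbb{B}$ with $\langle x_i\rangle = x$, $\langle y_i\rangle = y$, and $\min_i m_i = 1$. Admissibility means $y_i/Q \leq m_i \leq y_i$ for each $i$, and the minimum in $m_i$ is attained at some index $i_0$.

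Next, I would replace each $m_i$ by the smallest value that remains feasible in $\Omega_\mathbb{B}$ while keeping $\min_i m_i = 1$, namely
\[
\widetilde m_i := \max\bigl(1,\, y_i/Q\bigr).
\]
Since $m_i \geq 1$ and $m_i \geq y_i/Q$, we have $\widetilde m_i \leq m_i$, and by the monotonicity \eqref{BIsDecreasingInM},
\[
B(x_i, y_i, m_i) \leq B(x_i, y_i, \widetilde m_i).
\]
To use \eqref{MainInequalityForBv2} I must check the tuple $(x_i,y_i)$ lies in the class allowed there. The bounds $0 \leq x_i \leq 1$ and $\langle x_i\rangle = x$ are immediate, and $y_i \geq m_i \geq 1$. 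At the index $i_0$ we have $y_{i_0} \leq Q m_{i_0} = Q$, so $\min_i y_i \leq Q$ as required.

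Finally, applying \eqref{MainInequalityForBv2} to $(x_i,y_i)$ gives
\[
B(x, y, 1) \;\geq\; \Bigl\langle B\bigl(x_i, y_i, \max(1, y_i/Q)\bigr)\Bigr\rangle \;=\; \langle B(x_i, y_i, \widetilde m_i)\rangle \;\geq\; \langle B(x_i, y_i, m_i)\rangle,
\]
which is \eqref{MainInequalityForB} at $m=1$; homogeneity promotes it to all $m>0$. The main (if modest) obstacle is the bookkeeping of this replacement: one has to confirm that passing from $m_i$ to $\widetilde m_i$ simultaneously weakens the right-hand side of \eqref{MainInequalityForB} and lands in the class of tuples covered by the hypothesis \eqref{MainInequalityForBv2}. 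This is exactly where the precise shape of $\Omega_\mathbb{B}$ (the constraint $m \leq y \leq Qm$) and the attainment of $\min_i m_i$ at a feasible index conspire; beyond that, the argument is a direct chase through the definitions.
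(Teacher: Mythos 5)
Your proof is correct and follows the same approach as the paper: replace each $m_i$ by $\widetilde m_i = \max(1, y_i/Q)$, invoke monotonicity \eqref{BIsDecreasingInM} to bound $B(x_i,y_i,m_i)$ from above, and observe that the modified tuple now lands in the class of tuples covered by \eqref{MainInequalityForBv2}. You are somewhat more explicit than the paper in verifying admissibility (in particular in deriving $\min_i y_i \leq Q$ from the attainment of $\min_i m_i = 1$ together with the domain constraint $y \leq Qm$), but this is a matter of bookkeeping, not a different route.
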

\begin{proof}
    The above conditions make \eqref{MainInequalityForBv2} certainly necessary. To see that it is sufficient, take any $N$-tuple $(x_i,y_i,m_i)$ of points in $\Omega_{\mathbb{B}}$
    satisfying
    \[
        \langle x_i \rangle = x, \quad \langle y_i \rangle = y \quad \text{and} \quad \min( m_i ) = 1.
    \]

    Consider now the alternative $N$-tuple formed by $(x_i,y_i,\widetilde{m}_i)$, where
    \begin{align*}
        \widetilde{m}_i &=
            \begin{cases}
                \frac{y_i}{Q} &\text{if } y_i \geq Q \\
                1 &\text{otherwise}.
            \end{cases} \\
            &= \max\Bigl( 1, \frac{y_i}{Q} \Bigr).
    \end{align*}

    These points all lie in $\Omega_{\mathbb{B}}$ and moreover they still satisfy the condition
    \[
        \min( \widetilde{m}_i ) = 1.
    \]

    However, by inequality \eqref{BIsDecreasingInM} we have
    \[
        B\Bigl( x_i, y_i, \max\Bigl( 1, \frac{y_i}{Q} \Bigr) \Bigr) \geq B(x_i, y_i, m_i ).
    \]
\end{proof}

This proposition is useful because it allows us to ``almost'' eliminate the third variable from our analysis. The reason that we used the word ``almost'' comes from the fact that we still have the extraneous condition that
$\min(y_i) \leq Q$, which is an effect of having $\min(m_i) = 1$. We now proceed to eliminate this condition too.

Suppose that of the $N$ points $(x_i, y_i)$, there are exactly $N-k$ of them for which $y_i \geq Q$. Then, after possibly reordering the inequality (which we can do without loss of generality), the right hand side of \eqref{MainInequalityForBv2}
becomes
\[
    \frac{1}{N}\Bigl( \sum_{i=1}^k B(x_i,y_i,1) + \sum_{i=k+1}^N B\Bigl(x_i,y_i, \max\Bigl( \frac{y_i}{Q} \Bigr) \Bigr) \Bigr)
\]
which can be written, after applying the homogeneity property \eqref{HomogeneityForB}, as
\begin{equation*}
    \frac{1}{N}\Bigl( \sum_{i=1}^k B(x_i,y_i,1) + \sum_{i=k+1}^N \frac{y_i}{Q}B(x_i,Q, 1)\Bigr).
\end{equation*}

So, verifying \eqref{MainInequalityForBv2} reduces to just showing that $B$ is concave in $(x,y)$, decreasing in $m$, and that for each $1 \leq k \leq N-1$
\begin{equation} \label{MainInequalitiesForBv3}
    B(x,y,1) \geq \frac{1}{N}\Bigl( \sum_{i=1}^k B(x_i,y_i,1) + \sum_{i=k+1}^N \frac{y_i}{Q}B(x_i,Q, 1)\Bigr)
\end{equation}
for all $(x,y)$ and all $(x_i,y_i)$ as in Proposition \eqref{MainInequalityUnderHomogeneity}, with the additional assumption that $y_i \geq Q$ for $k \geq k+1$.

The next proposition allows us to just consider the case where $k=N-1$ in the above inequality.
\begin{proposition}
    Let $M$ be a nonnegative function defined on $\Omega$ and which satisfies that
    \begin{enumerate}
        \item $M$ is concave.
        \item The function $t \mapsto tM(x,y/t)$ is decreasing.
        \item For all $(x,y)$ and all $(\widetilde{x},\widetilde{y})$ in $\Omega$ we have
            \begin{equation} \label{MainInequalityforM}
                M(x,y) \geq \frac{N-1}{N}M(\widetilde{x},\widetilde{y}) + \frac{Ny - (N-1)\widetilde{y}}{QN}M(Nx-(N-1)\widetilde{x},Q)
            \end{equation}
            whenever $Nx-(N-1)\widetilde{x} \geq 0$ and $Ny-(N-1)\widetilde{y} \geq Q$.
    \end{enumerate}

    Then, defining $B$ by homogeneity as in \eqref{HomogeneityForB}:
    \[
        B(x,y,m) = m M(x,y/m),
    \]
    yields a function which satisfies the conditions of Proposition \ref{MainInequalityUnderHomogeneity}
\end{proposition}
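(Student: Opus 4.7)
The strategy is to verify each of the five conditions required by Proposition \ref{MainInequalityUnderHomogeneity} for $B(x,y,m):=mM(x,y/m)$. Four of them are essentially free. Nonnegativity transfers from $M\geq 0$; the homogeneity \eqref{HomogeneityForB} is visible in the defining formula; the obstacle $\mathbb{B}(1,y,y)=y$ collapses to the normalization $M(1,1)=1$, which I take as part of the standing setup; and the $m$-monotonicity \eqref{BIsDecreasingInM} is precisely the content of hypothesis (2), once one notes that $B(x,y,m)=\phi(m)$ for $\phi(t)=tM(x,y/t)$.

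The substantive task is the main inequality \eqref{MainInequalityForBv2}. The reduction carried out just before \eqref{MainInequalitiesForBv3}, which uses only homogeneity and $m$-monotonicity (both already verified), shows that \eqref{MainInequalityForBv2} is equivalent to establishing \eqref{MainInequalitiesForBv3} for every integer $k$ with $1\leq k\leq N-1$, so I focus there.

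The flagship case is $k=N-1$, which I plan to handle as a direct translation of hypothesis (3). The first $N-1$ points $(x_i,y_i)$ all satisfy $y_i\in[1,Q]$ and thus lie in the domain of $M$; the concavity hypothesis (1) collapses them to their arithmetic mean $(\tilde x,\tilde y)\in\Omega$ with the loss $\sum_{i=1}^{N-1}M(x_i,y_i)\leq(N-1)M(\tilde x,\tilde y)$. The averaging constraints $\langle x_i\rangle=x$ and $\langle y_i\rangle=y$ identify the unique high-$y$ point as $(Nx-(N-1)\tilde x,\,Ny-(N-1)\tilde y)$, and substituting into \eqref{MainInequalitiesForBv3} produces exactly the right-hand side of \eqref{MainInequalityforM}. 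The two domain conditions required by (3), namely $Nx-(N-1)\tilde x\geq 0$ and $Ny-(N-1)\tilde y\geq Q$, are automatic from $x_N\in[0,1]$ and from the fact that the $N$-th point is high-$y$, so (3) closes the estimate.

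The remaining cases $k<N-1$ must be reduced to this flagship case. My plan is to consolidate, by (weighted) concavity, the $k$ low-$y$ points into their mean $(\bar x_1,\bar y_1)$ and the $N-k$ high-$y$ points (all sitting at height $Q$ after the homogeneity rewriting) into a single $y$-weighted point $(\bar x_2,Q)$, turning \eqref{MainInequalitiesForBv3} into a two-point inequality. The main obstacle is that this reduced inequality does not fit the template of \eqref{MainInequalityforM} verbatim: the weight it places on the low-$y$ term is $k/N$ rather than the $(N-1)/N$ produced by (3). I expect to bridge this gap by exploiting the scaling monotonicity (2), in the precise form $M(\bar x_1,\bar y_1)\leq(\bar y_1/Q)M(\bar x_1,Q)$, to trade the "missing" weight $(N-1-k)/N$ on the low-$y$ term for boundary weight at $y=Q$, after which a final application of (3) (or of the $k=N-1$ case just established) with a suitably chosen $(\tilde x,\tilde y)$ should close the estimate. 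This coefficient-matching, and the verification that the resulting auxiliary points remain in $\Omega$, is the technical heart of the argument and is where all three hypotheses (1), (2), (3) jointly come into play.
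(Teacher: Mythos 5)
Your treatment of the easy conditions (nonnegativity, homogeneity, the obstacle via $M(1,1)=1$, and the $m$-monotonicity \eqref{BIsDecreasingInM} from hypothesis (2)) is correct, as is your reduction of \eqref{MainInequalityForBv2} to \eqref{MainInequalitiesForBv3} for $1\le k\le N-1$. Your flagship case $k=N-1$ is also right, and is in fact slightly more explicit than the paper's remark that ``\eqref{MainInequalityforM} is just the case of \eqref{MainInequalityForBv2} with $k=N-1$'': you first collapse the $N-1$ low-$y$ points to their arithmetic mean by concavity, then feed the resulting pair into hypothesis (3).

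The gap is in the case $k<N-1$. You propose to collapse the $N-k$ high-$y$ points, by $y$-weighted concavity, into a single point $(\bar x_2,Q)$ and then bridge the weight mismatch with hypothesis (2). The difficulty is that $\bar x_2$ is the $y$-weighted average of the high $x_i$, so after the collapse the constraint $\langle x_i\rangle=x$ no longer determines an affine relation among $x$, $\bar x_1$, $\bar x_2$; consequently you have no canonical candidate for the pair $(\widetilde x,\widetilde y)$ and boundary point $(Nx-(N-1)\widetilde x,Q)$ that (3) requires. In addition, the direction of (2) you invoke, $M(\bar x_1,\bar y_1)\le(\bar y_1/Q)M(\bar x_1,Q)$ for $\bar y_1\le Q$, pushes weight \emph{onto} the line $y=Q$, while matching the coefficient $(N-1)/N$ in (3) would require moving boundary weight back \emph{into} the interior; it is not clear these two moves compose.

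The paper closes this case by a cleaner device which avoids collapsing the high points at all. Keeping the $k$ low points fixed, it redistributes the $y$-mass among the high points: replace $(y_{k+1},\dots,y_N)$ by $(b_{k+1},\dots,b_N)$ where $b_i=Q$ for every $i>k$ except the index $i_{\max}$ that maximizes $M(x_i,Q)$, which absorbs the remaining mass $(N-k)\widehat y-Q(N-k-1)\ge Q$. Concentrating the mass on the largest $M(x_i,Q)$ while preserving $\sum_i b_i=\sum_i y_i$ can only increase the right-hand side of \eqref{MainInequalitiesForBv3}. But now, for each $i>k$ with $i\ne i_{\max}$, the term $\frac{b_i}{Q}M(x_i,Q)=M(x_i,Q)$ has exactly the shape of a low-$y$ summand at the admissible point $(x_i,Q)\in\Omega$. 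Reclassifying those $N-k-1$ points as low leaves $N-1$ low points and a single high point $(x_{i_{\max}},b_{i_{\max}})$, i.e.\ precisely the $k=N-1$ case you already have. Replace your ``collapse then bridge'' step with this mass-redistribution-and-reclassification argument and the proof goes through.
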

\begin{proof}
    First of all note that, by the above discussion, we just need to find $M$ satisfying the conditions (1), (2) and
    \[
        M(x,y) \geq \frac{1}{N}\Bigl( \sum_{i=1}^k M(x_i,y_i,1) + \sum_{i=k+1}^N \frac{y_i}{Q}M(x_i,Q, 1)\Bigr),
    \]
    where the average of $x_i$ is $x$, the average of $y_i$ is $y$ and all $y_i \geq Q$ for $i \geq k+1$.

    Also, note that \eqref{MainInequalityforM} is just the case of \eqref{MainInequalityForBv2} with $k=N-1$. So, in what follows we assume $k < N-1$.

    Fix all points $(x_i, y_i)$ for $i \leq k$ and consider the collection $\mathcal{V}$ of all vectors $\vec{y} = (y_{k+1}, \dots, y_N)$ with $y_i \geq Q$ for $k \geq k+1$ and satisfying.
    \[
        \frac{1}{N}\sum_{i=k+1}^N y_i + \frac{1}{N}\sum_{i=1}^k y_i = y.
    \]

    We can write this condition as
    \[
        \widehat{y} := \frac{1}{N-K}\sum_{i=k+1}^N y_i = \frac{Ny-\sum_{i=1}^k y_i}{N-k} = \frac{Ny-k\widetilde{y}}{N-k},
    \]
    where we have defined $\widetilde{y} = \frac{1}{k} \sum_{i=1}^k y_i$.

    It is an easy exercise to verify that
    \[
        \frac{1}{N} \sum_{i=k+1}^N \frac{y_i}{Q}M(x_i,Q) \leq \frac{1}{QN}\sum_{i=k+1}^N b_i M(x_i,Q),
    \]
    where $b_i$ are defined by
    \[
        b_i =
        \begin{cases}
            Q &\text{if } i \neq i_{\text{max}} \\
            (N-k)\widehat{y}-Q(N-k-1) &\text{if } i=i_{\text{max}},
        \end{cases}
    \]
    and where $i_{\text{max}}$ is defined to be the index which maximizes $M(x_i,Q)$ for $i \geq k+1$.

    Observe that the vector $(b_{k+1}, \dots, b_N) \in \mathcal{V}$, so we can assume that $y_i = b_i$ for $i \geq k+1$. But then, we can reorganize the
    inequality to put all of the terms except one (the one with $i_{\text{max}}$) on the first summation. Writing it this way makes it evident that it really
    was a particular example of the inequality with $k = N-1$.
\end{proof}

\section{Finding the Bellman function} \label{BellmanFunctionLowerBound}

In this section we give a lower bound $M$ for $\mathbb{M}$, and in the next section we will show that this lower bound is also an upper bound and hence that $M = \mathbb{M}$.

First recall that
\[
    t \mapsto \mathbb{M}(x,y/t)
\]
is non-increasing and therefore that $\mathbb{M}(1,y) \geq y$ (here we are using the obstacle $\mathbb{M}(1,1) = 1$. Since $\mathbb{M}(0,1) \geq 0$, we now can extend this bound to the subdomain $y \leq 1 + (Q-1)x$ to get:
\[
    \mathbb{M}(x,y) \geq x+y-1 \quad \forall (x,y) \in \Omega:\, y \leq 1 + (Q-1)x.
\]

We will now give a lower bound for $\mathbb{M}$ in the rest of the domain. The idea is to use inequality \eqref{MainInequalityforM} setting the number $Nx - (N-1)\widetilde{x}$ to be as large as possible,
within the domain that we know, and then iterate.

More precisely let $x_0 = 1$, observe that if $Nx - (N-1)\widetilde{x} = x_0$, then
\[
    \widetilde{x} = \frac{Nx - x_0}{N-1}.
\]
Clearly we need $x \geq 1/N$ for $\widetilde{x}$ to be in the domain, so we set $x = \frac{1}{N}$.
We will also make $\widetilde{y}$ as small as possible, which means $\widetilde{y} = 1$.

Putting it all together we obtain, using \eqref{MainInequalityforM} with $x= \frac{1}{N}$ and $y = Q$:
\[
    \mathbb{M}\Bigl(\frac{1}{N},Q\Bigr) \geq \frac{NQ-(N-1)}{NQ}\mathbb{M}(x_0,Q) = Q \Bigl( 1 - \frac{N-1}{NQ} \Bigr).
\]

Now we iterate this procedure. Set $x = x_{k+1} = \frac{x_k}{N}$, $y = Q$, $\widetilde{y} = 1$ and $\widetilde{x} = 0$, then \eqref{MainInequalityforM} gives
\[
    \mathbb{M}(x_{k+1},Q) \geq ( 1 - \frac{N-1}{NQ} \Bigr) \mathbb{M}(x_k,Q),
\]
so
\[
    \mathbb{M}(N^{-k},Q) \geq Q \Bigl( 1 - \frac{N-1}{NQ} \Bigr)^k.
\]

Between $x_{k+1}$ and $x_k$ we know that $M(\cdot, Q)$ is concave, so $\mathbb{M}$ must certainly be at least linear in these intervals. Now, since $\mathbb{M}(0,1) \geq 0$, we can
also extend this bound by homogeneity and get the upper bound
\begin{align*}
    \mathbb{M}(x,y) \geq \frac{y-1}{Q-1}\mathbb{M}\Bigl( x\frac{Q-1}{y-1}, Q \Bigr) \geq \frac{y-1}{Q-1}f\Bigl( x \frac{Q-1}{y-1} \Bigr)
\end{align*}
for $y-1 \geq x$. Here, $f$ is the piecewise linear function defined on $[0,1]$ by linearly interpolating the points
\[
    f(x_k) = Q\Bigl( 1 - \frac{N-1}{NQ} \Bigr)^k
\]
between $x_{k+1}$ and $x_k$, \autoref{Figure:Difference} shows what $f$ typically looks like.

Putting it all together, we get
\begin{equation} \label{DefOfM}
    \mathbb{M}(x,y) \geq
    \begin{cases}
        x+y-1 &\text{if }y \leq 1+(Q-1)x \\
        \frac{y-1}{Q-1}f\Bigl( x \frac{Q-1}{y-1} \Bigr) &\text{if } y \geq 1 + (Q-1)x.
    \end{cases} \Biggr\} =: M(x,y).
\end{equation}

The way we proved these bounds also shows how one would construct pairs of weights $w$ and sets $E$ showing that $\mathbb{M}$ is at least the promised lower bound. We now give a detailed description
of these examples.

\subsection{Explicit extremizers}
Let's start with examples corresponding to the line $(1,y)$ with $y \in [1,Q]$. To get the bound $\mathbb{M}(1,y) \geq y$ we used the main inequality keeping all the parameters fixed except one of the $m_i$'s.
So let us repeat the proof, but now with actual weights. Fix a cube $P$ and let $P_1, \dots P_N$ be its dyadic children. Define $w_i(x) = 1$ for all $i$ and all $x \in P_i$ except for $i = N$, for which we define
$w_i(x) = 1 + N(y-1)$ for all $x \in P_N$.
Now define $w(x) = w_i(x)$ for all $x \in P_i$; clearly $\operatorname{ess\,inf}_{x \in P}w(x) = 1$ and $\langle w \rangle_P = y$. Now, since $x = 1$, we should set $E = P$.
The pair $(w,E)$ is clearly contained in the supremum in the definition of $\mathbb{B}(1,y,1) = \mathbb{M}(1,y)$ and so
\begin{equation} \label{ExampleForEasyBoundary}
    \mathbb{M}(1,y) \geq \frac{w(P)}{|P|} = y
\end{equation}
for this particular choice of $w$. Of course, any weight with $\langle w \rangle_P = y$ would also have been sufficient since $x = 1$.

Examples for weights and sets corresponding to points $(x,y)$ on the rest of the domain are more complicated. We will start by constructing examples along the line $y = Q$.

The way we proved that $\mathbb{M}(\frac{1}{N},Q) \geq Q(1-\frac{N-1}{NQ})$ was by using \eqref{MainInequalityforM} with $\widetilde{x} = 0$, $\widetilde{y} = 1$, $x = \frac{1}{N}$ and $y = Q$. Similarly, we
got the bound $\mathbb{M}(x_{k+1},Q) \geq (1-\frac{N-1}{NQ}) \mathbb{M}(x_k,Q)$ by using \eqref{MainInequalityforM} with $\widetilde{x} = 0$, $\widetilde{y} = 1$, $x = \frac{1}{N^{k+1}}$ and $y = Q$. Looking back
at how we got \eqref{MainInequalityforM}, we see that we combined $N-1$ trivial weight-set pairs (the pairs $(w \equiv 1, E = \emptyset)$) with an example coming from
\[
    \mathbb{B}\Bigl(\frac{1}{N^k},N(Q-1)+1,N - \frac{N-1}{Q}\Bigr).
\]
We then used homogeneity to translate this to an example which would extremize
\[
    \mathbb{M}\Bigl( \frac{1}{N^k}, Q \Bigr),
\]
but having lost a factor slightly larger than one.

We can trace back these steps with the following lemma:
\begin{lemma} \label{BoundaryConstructionLemma}
    Let $P$ be a cube in $\mathbb{R}^d$.
    Given a pair $(w,E)$ where $w$ is a dyadic $A_1$ weight with $[w]_{A_1} \leq Q$ and with $\langle w \rangle_P = Q$, $\operatorname{ess\,inf}_{z \in P}w(z) = 1$, and $\langle \mathbbm{1}_E \rangle_P = x$, there exists a pair
    $(\widetilde{w}, \widetilde{E})$ where $\widetilde{w}$ is another dyadic $A_1$ weight with $[w]_{A_1^d} \leq Q$ and
    with $\langle \widetilde{w} \rangle_P$, $\operatorname{ess\,inf}_{z \in P}\widetilde{w}(z) = 1$, and $\langle \mathbbm{1}_{\widetilde{E}} \rangle_P = x/N$ for which
    \[
        \frac{\widetilde{w}(\widetilde{E})}{|P|} \geq \Bigl( 1 - \frac{N-1}{NQ} \Bigr) \frac{w(E)}{|P|}.
    \]

    Moreover, the set $\widetilde{E}$ is entirely contained in one of the dyadic subcubes of $P$ and $\widetilde{w}$ is identically $1$ on the complement of $\widetilde{E}$.
\end{lemma}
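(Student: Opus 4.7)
The plan is to build $(\widetilde{w},\widetilde{E})$ by concentrating a rescaled copy of $(w,E)$ on a single dyadic child $P_0$ of $P$ and extending by $1$ on the remaining $N-1$ children; this is exactly the reverse of the assembly procedure used in the reduction leading to \eqref{MainInequalityforM}. Fix any dyadic child $P_0$ of $P$, let $\phi:P_0\to P$ be the canonical affine bijection between them, and set
\[
\alpha:=N-\frac{N-1}{Q}=\frac{NQ-N+1}{Q}.
\]
Define
\[
\widetilde{w}(z):=\begin{cases}\alpha\,w(\phi(z))&z\in P_0,\\ 1&z\in P\setminus P_0,\end{cases}\qquad \widetilde{E}:=\phi^{-1}(E)\subseteq P_0.
\]
The value of $\alpha$ is forced by the average condition $\langle\widetilde{w}\rangle_P=Q$: a direct computation gives $\langle\widetilde{w}\rangle_P=(\alpha Q+N-1)/N$, and this equals $Q$ precisely when $\alpha=N-(N-1)/Q$. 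Since $Q\geq 1$ one has $\alpha\geq 1$, and since $w\geq 1$ a.e.\ on $P$, $\widetilde{w}\geq\alpha\geq 1$ on $P_0$ while $\widetilde{w}=1$ on $P\setminus P_0$, giving $\operatorname{ess\,inf}_P\widetilde{w}=1$.

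The remaining size conditions follow by changes of variables along $\phi$. One immediately gets $|\widetilde{E}|=|E|/N$, so $\langle\mathbbm{1}_{\widetilde{E}}\rangle_P=x/N$, and
\[
\frac{\widetilde{w}(\widetilde{E})}{|P|}=\frac{\alpha}{N}\cdot\frac{w(E)}{|P|}=\Bigl(1-\frac{N-1}{NQ}\Bigr)\frac{w(E)}{|P|},
\]
so the quantitative inequality of the lemma actually holds with equality. The structural statement --- that $\widetilde{E}$ lies inside a single dyadic child of $P$ and that $\widetilde{w}\equiv 1$ off that child --- is built into the construction.

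The only step that requires any real verification is the $A_1^d$ bound $[\widetilde{w}]_{A_1^d(P)}\leq Q$. For $z\in P_0$, the dyadic cubes $R\in\mathcal{D}(P)$ containing $z$ are either dyadic subcubes of $P_0$ or $P$ itself; if $R\subseteq P_0$ then $\phi(R)\in\mathcal{D}(P)$ and
\[
\langle\widetilde{w}\rangle_R=\alpha\langle w\rangle_{\phi(R)}\leq \alpha Q\,w(\phi(z))=Q\,\widetilde{w}(z)
\]
by the hypothesis $[w]_{A_1^d(P)}\leq Q$, while for $R=P$ one has $\langle\widetilde{w}\rangle_P=Q\leq Q\,\widetilde{w}(z)$ because $\widetilde{w}(z)\geq\alpha\geq 1$. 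For $z$ in any other child $P_j\neq P_0$ we have $\widetilde{w}(z)=1$, and the only averages to consider are $\langle\widetilde{w}\rangle_R=1$ for $R\in\mathcal{D}(P_j)$ and $\langle\widetilde{w}\rangle_P=Q$, so again $M^d_P\widetilde{w}(z)=Q=Q\,\widetilde{w}(z)$.

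No deep obstacle is expected; the proof is the explicit verification above. The only conceptually delicate point is the choice of $\alpha$: it must simultaneously be large enough to push the average of $\widetilde{w}$ back up to $Q$ after diluting by the $N-1$ trivial children, and small enough that the $A_1^d$ characteristic does not deteriorate past $Q$. These two constraints happen to be satisfied together exactly by $\alpha=N-(N-1)/Q$, which is why this particular constant (and hence the factor $1-(N-1)/(NQ)$) appears in the conclusion.
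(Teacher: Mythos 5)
Your proposal is correct and follows the same construction as the paper: rescale and transplant $(w,E)$ onto a single dyadic child (the paper uses $P_1$, you use $P_0$), multiply by the factor $\alpha = (NQ-(N-1))/Q$, and set the weight to $1$ on the remaining $N-1$ children. One minor difference is that you actually verify the $A_1^d$ bound $[\widetilde{w}]_{A_1^d(P)} \leq Q$ by checking the maximal function on each stratum of cubes, whereas the paper merely asserts it; your verification is the right one and closes that small gap.
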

\begin{proof}
    As before, enumerate the children of $P$ by $P_1, \dots, P_N$. We start by translating and dilating $(w,E)$ to the subcube $P_1$, we do this with the obvious linear change of variables. We then multiply
    the weight we just constructed by the constant $\frac{NQ-(N-1)}{Q}$. Let us call this new weight $w_1$.
    Clearly $\operatorname{ess\,inf}_{z \in P_1}w_1(z) = \frac{NQ-(N-1)}{Q} \geq 1$ and $\langle w_1 \rangle_{P_1} = NQ-(N-1)$. Now define $w_i(z) = 1$ for all $z \in P_i$ and each $i \geq 2$ and combine all of these weights into one:
    $\widetilde{w}(z) = w_i(z)$, for all $z \in P_i$. This new weight is a dyadic $A_1$ weight with $[\widetilde{w}]_{A_1^d} \leq Q$.

    The set $E$ is just translated and dilated to $P_1$ using the same change of variables used to define $w_1$. Now $\widetilde{E}$ is just a scaled copy of $E$
    living in $P_1$, so we of course have $\langle \mathbbm{1}_{\widetilde{E}} \rangle = x/N$.

    We assert that this new pair $(\widetilde{w},\widetilde{E})$ satisfies the promised estimate. Indeed (assuming without loss of generality that $|P| = 1$):
    \begin{align*}
        \widetilde{w}(\widetilde{E}) &= \frac{1}{N}\Bigl( (N-1)w_2(\widetilde{E}) + w_1(\widetilde{E})  \Bigr) \\
                                     &= \frac{1}{N}w_1(\widetilde{E}) \\
                                     &= \Bigl( 1 - \frac{N-1}{NQ} \Bigr) w(E),
    \end{align*}
    which is what we wanted.
\end{proof}

Given a cube $P$ and a pair $(w,E)$ as in Lemma \ref{BoundaryConstructionLemma}, we define
\[
    T(w) = \widetilde{w},
\]
where $\widetilde{w}$ is the weight constructed in the proof of Lemma \ref{BoundaryConstructionLemma}. Similarly, we define $S(E) = \widetilde{E}$.

With this lemma at hand we can now describe the structure of the examples which show that $\mathbb{M}(N^{-k},Q) \geq Q(1-\frac{N-1}{NQ})^k$.
\begin{lemma} \label{BoundaryPointsDiscrete}
    Let $P$ be any cube and let $w_0$ be the weight constructed when proving \eqref{ExampleForEasyBoundary} (but any weight with $\langle w_0 \rangle_P = Q$, $\operatorname{ess\,inf}_{z \in P}w_0(z) = 1$, and with $[w]_{A_1^d} = Q$
    will work as well).

    Define the weights $w_k$ and the sets $E_k$ inductively by
    \[
        w_{k+1} = Tw_k \quad \text{and} \quad E_{k+1} = SE_k,
    \]
    where $E_0 = P$.

    Then $w_k$ is an $A_1^d$ weight with $[w]_{A_1^d} = Q$, $\langle w_k \rangle_P = Q$, $\operatorname{ess\,inf}_{z \in P}w_k(z) = 1$, $\langle \mathbbm{1}_{E_k} \rangle_P = N^{-k}$ and
    \[
        \frac{w_k(E_k)}{|P|} = Q\Bigl( 1 - \frac{N-1}{NQ} \Bigr)^k.
    \]
\end{lemma}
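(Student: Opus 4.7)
The plan is to prove the statement by induction on $k$, using Lemma \ref{BoundaryConstructionLemma} as the single-step engine. For the base case $k=0$, the set $E_0=P$ gives $\langle \mathbbm{1}_{E_0}\rangle_P = 1 = N^{-0}$ and $w_0(E_0)/|P| = \langle w_0\rangle_P = Q = Q(1 - \tfrac{N-1}{NQ})^0$; the remaining properties are built into the hypotheses on $w_0$.

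For the inductive step, assume the five conclusions hold for $(w_k, E_k)$. Then the triple $(\langle w_k\rangle_P, \operatorname{ess\,inf}_P w_k, [w_k]_{A_1^d}) = (Q, 1, Q)$ meets the hypotheses of Lemma \ref{BoundaryConstructionLemma}, and applying it with $x = N^{-k}$ produces $(w_{k+1}, E_{k+1}) = (Tw_k, SE_k)$. The lemma delivers $\operatorname{ess\,inf}_P w_{k+1} = 1$ and $\langle \mathbbm{1}_{E_{k+1}}\rangle_P = N^{-k}/N = N^{-(k+1)}$ directly.

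For the average, I would compute it by splitting over the children of $P$. On $P_1$ the weight $w_{k+1}$ is a rescaling of $w_k$ by the factor $(NQ-(N-1))/Q$, so
\[
    \int_{P_1} w_{k+1} = \frac{NQ-(N-1)}{Q}\cdot\frac{1}{N}\int_P w_k = \frac{NQ-(N-1)}{N}|P|,
\]
while each of the other $N-1$ children contributes $|P|/N$; summing gives $\langle w_{k+1}\rangle_P = Q$. Since then $\langle w_{k+1}\rangle_P/\operatorname{ess\,inf}_P w_{k+1} = Q$, testing the $A_1^d$ condition on the cube $P$ itself forces $[w_{k+1}]_{A_1^d} \geq Q$, and combined with the upper bound $[w_{k+1}]_{A_1^d}\leq Q$ from Lemma \ref{BoundaryConstructionLemma} we get equality.

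The remaining identity $w_{k+1}(E_{k+1})/|P| = Q(1-\tfrac{N-1}{NQ})^{k+1}$ follows by multiplying the inductive hypothesis by the per-step factor $1-\tfrac{N-1}{NQ}$. The only subtle point is that Lemma \ref{BoundaryConstructionLemma} states a lower bound, whereas here we need equality: this is extracted by inspecting its proof, where the computation $\widetilde{w}(\widetilde{E}) = \tfrac{1}{N}w_1(\widetilde{E}) = (1-\tfrac{N-1}{NQ})w(E)$ is exact. This per-step equality is really the only place the argument needs care; every other property propagates by a direct substitution.
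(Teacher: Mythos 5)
Your proof is correct and follows exactly the paper's one-line proof (``iteratively apply Lemma~\ref{BoundaryConstructionLemma}''), only with the details spelled out: the induction structure, the fact that the per-step inequality in Lemma~\ref{BoundaryConstructionLemma} is in fact an equality, and the observation that $\langle w_{k+1}\rangle_P/\operatorname{ess\,inf}_P w_{k+1}=Q$ forces the lower bound $[w_{k+1}]_{A_1^d}\geq Q$ to match the upper bound. Nothing to add.
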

\begin{proof}
    The proof is just to iteratively apply Lemma \ref{BoundaryConstructionLemma}.
\end{proof}

It remains to extend the examples to the rest of the domain. But recall that the bound we gave for $\mathbb{M}$ on the rest of the domain was obtained by linear interpolation, so we just need to combine examples that
have already been constructed.

The following lemma shows how to combine two pairs $(w_0,E_0)$ and $(w_1,E_1)$ into one:
\begin{lemma}
    Let $P$ be a cube and let $(w_0,E_0)$ and $(w_1,E_1)$ be two pairs. Assume $w_0$ and $w_1$ are both dyadic $A_1$ weights with $[w_i]_{A_1^d} \leq Q$, and also:
    \[
        \langle \mathbbm{1}_{E_i} \rangle_P = x_i, \quad \langle w_i \rangle_P = y_i, \quad \operatorname{ess\,inf}_{z \in P} w_i(z) = 1.
    \]

    Then, for any $\lambda \in [0,1]$ we can construct a pair $\mathcal{C}_\lambda((w_0,E_1), (w_1,E_1)) = (w,E)$, where $w$ is a dyadic $A_1$ weight with $[w]_{A_1^d} \leq Q$,
    \[
        \langle \mathbbm{1}_E \rangle_P = x, \quad \langle w \rangle_P = y, \quad \operatorname{ess\,inf}_{z \in P}w(z) = 1,
    \]
    and
    \[
        \frac{w(E)}{|P|} = (1-\lambda)\frac{w_0(E_0)}{|P|} + \lambda \frac{w_1(E_1)}{|P|},
    \]
    where
    \[
        x = (1-\lambda)x_0 + \lambda x_1 \quad \text{and} \quad y = (1-\lambda)y_0 + \lambda y_1.
    \]
\end{lemma}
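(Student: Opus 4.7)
The plan is to construct $(w,E)$ by partitioning $P$ (up to a null set) into a countable collection of pairwise disjoint dyadic subcubes $\{L_\alpha\}$ on each of which $(w,E)$ is an affinely rescaled \emph{full} copy of either $(w_0,E_0)$ or $(w_1,E_1)$, arranged so that the leaves of $(w_1,E_1)$-type cover total measure exactly $\lambda|P|$. When $\lambda$ has the special form $k/N$ this can be done in a single step: partition $P$ into its $N$ first-generation dyadic children, place a full scaled-translated copy of $(w_1,E_1)$ on $k$ of them and a full scaled-translated copy of $(w_0,E_0)$ on the remaining $N-k$. For general $\lambda \in [0,1]$, I would expand $\lambda = \sum_{j=1}^{\infty} d_j N^{-j}$ in base $N$ (with $d_j \in \{0,\dots,N-1\}$) and iterate: at stage $j$, allocate $d_j$ of the $N$ generation-$j$ dyadic cubes at the current site to $(w_1,E_1)$-leaves, $N - d_j - 1$ to $(w_0,E_0)$-leaves, and reserve a single generation-$j$ cube for further subdivision at stage $j+1$.

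The essential infimum condition $\operatorname{ess\,inf}_{z\in P} w(z) = 1$ is immediate because each $w_i$ satisfies it and affine rescaling preserves essential infima. The identities $\langle \mathbbm{1}_E \rangle_P = (1-\lambda)x_0 + \lambda x_1$, $\langle w \rangle_P = (1-\lambda)y_0 + \lambda y_1$, and $w(E)/|P| = (1-\lambda)w_0(E_0)/|P| + \lambda w_1(E_1)/|P|$ all follow by direct summation over the leaves, using the fact that an affine rescaling of $(w_i,E_i)$ from $P$ to a cube $L$ of measure $\alpha|P|$ multiplies $w_i(E_i)$ by $\alpha$ and the measure of the $E_i$-copy by $\alpha$.

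The delicate part is verifying $[w]_{A_1^d(P)} \leq Q$. Every dyadic subcube $R$ of $P$ falls into one of two cases: either $R$ is contained in a single leaf $L$, or $R$ is itself the disjoint union (up to a null set) of the leaves it contains. In the first case, $w|_R$ is the affine image of $w_i|_{R'}$ for some dyadic $R'\subseteq P$, so the $A_1^d$ inequality at $R$ is inherited directly from $[w_i]_{A_1^d}\leq Q$. In the second case each leaf $L_\beta \subseteq R$ is a full copy of some $(w_i,E_i)$, so $\operatorname{ess\,inf}_{L_\beta} w = 1$ and hence $\operatorname{ess\,inf}_R w = 1$; the required inequality then reduces to $\langle w \rangle_R \leq Q$, and $\langle w \rangle_R = \sum_\beta (|L_\beta|/|R|) \langle w \rangle_{L_\beta}$ is a convex combination of values in $\{y_0,y_1\} \subseteq [1,Q]$, hence lies in $[1,Q]$.

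The main obstacle I anticipate is the bookkeeping for the recursive base-$N$ construction when $\lambda$ is irrational. I would need to check that the nested reserved cubes shrink to a measure-zero set (a geometric-series argument, since at each level the reserved cube occupies a fraction $1/N$ of the previous one) so that the leaves partition $P$ almost everywhere, and that the two-case analysis above really applies to \emph{every} dyadic subcube $R$ of $P$, including the reserved cubes themselves. Since the $A_1^d$ condition is a pointwise almost-everywhere condition on dyadic cubes and every reserved cube is a countable disjoint union of leaves from deeper generations, no limiting argument on the weight itself is required.
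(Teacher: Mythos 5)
Your construction is correct and in the same spirit as the paper's: both partition $P$ (up to a null set) into a countable family of pairwise disjoint dyadic ``leaves'', put a full affinely rescaled copy of $(w_0,E_0)$ or $(w_1,E_1)$ on each leaf, and arrange for the type-$1$ leaves to occupy measure exactly $\lambda|P|$. The difference is only in the bookkeeping of the partition: the paper expands $\lambda$ in base $2$ and at stage $j$ assigns \emph{all} of a certain measure-$2^{-j}|P|$ collection of generation-$j$ cubes to a single type $b_j\in\{0,1\}$, reserving a same-measure collection for the next stage; you expand $\lambda$ in base $N=2^d$ and at stage $j$ reserve a single generation-$j$ cube while assigning the other $N-1$ siblings a mixture of $d_j$ type-$1$ and $N-1-d_j$ type-$0$ leaves. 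Both schemes cover $P$ a.e.\ because the reserved region has measure $2^{-j}|P|$ (paper) or $N^{-j}|P|$ (yours) at stage $j$, tending to zero.

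Where you add genuine value is in the verification of $[w]_{A_1^d(P)}\le Q$. The paper defers this check to an external reference, whereas your two-case dichotomy (a dyadic $R\subseteq P$ is either contained in a leaf, in which case the $A_1^d$ bound is inherited by affine invariance, or $R$ is a disjoint union of leaves up to a null set, in which case $\operatorname{ess\,inf}_R w=1$ and $\langle w\rangle_R$ is a convex combination of $y_0,y_1\in[1,Q]$) is an explicit and clean argument. The dichotomy is exhaustive because the leaves and the reserved cubes together are dyadic and nested, so any dyadic $R$ is comparable to each of them; the only cube that fails to be a union of leaves is the intersection of the nested reserved cubes, which is a null set and hence irrelevant. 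The remaining equalities for $\langle\mathbbm{1}_E\rangle_P$, $\langle w\rangle_P$, $w(E)/|P|$, and $\operatorname{ess\,inf}_P w$ follow by direct summation over leaves exactly as you indicate. I see no gap.
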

\begin{proof}
    Note that, at least when $\lambda$ is a dyadic rational, repeated applications of the Main Inequality give exactly these dynamics. So we should follow the proof of the Main Inequality, whose
    meaning is to show what happens when one combines pairs $(w_i,E_i)$ defined on the dyadic children of a cube into one pair $(w,E)$ on the whole cube.

    There is a slight technicality: if one applies this combination procedure a finite number of times, one can only prove this lemma in the case where $\lambda$ is a dyadic rational, but
    we can still prove this lemma with a limiting argument.

    Let $b_i$ be the digits of $\lambda$ when written in binary:
    \[
        \lambda = \sum_{i=1}^\infty b_i 2^{-i}
    \]
    (it does not matter which of the possible binary representations one uses).

    Fix the cube $P$ and let $R$ be any of its dyadic subcubes. Define $S_{P \to R}$ to be the linear change of variables which maps $P$ to $R$.

    Given a cube $P$ let $P_1, \dots P_N$ be a fixed enumeration of its first-generation children, this ordering will be fixed throughout the proof (in the sense that we will use the same ordering on
    every other cube, which we obtain by translating and dilating the original ordering).

    The idea is to split the subcubes of $P$ and on half of them put a translated and dilated copy of either $(w_0,E_0)$ or $(w_1,E_1)$, depending on the binary digit of the current step. We apply the same procedure
    on each of the remaining cubes (but now with the next digit).

    More precisely, let $\operatorname{ch}(P)$ be the first-generation dyadic subcubes of $P$ and define $\mathcal{H}_{\pm}^1(P)$ to be the subset of $\operatorname{ch}(P)$ consisting of the first or second half the dyadic children, i.e.:
    \[
        \mathcal{H}_{-}^1(P) = \{ P_1, \dots, P_{2^{d-1}} \} \quad \text{and} \quad \mathcal{H}_{+}^1(P) = \{P_{2^{d-1}+1}, \dots, P_{2^d} \}.
    \]

    We inductively define $\mathcal{H}_{\pm}^{j+1}(P)$ as follows:
    \[
        \mathcal{H}_{\pm}^{j+1}(P) = \bigcup_{R \in \mathcal{H}_{+}^{j}(P)} \mathcal{H}_{\pm}(R).
    \]

    We define the weight $w$ by
    \[
        w(x) = \sum_{j=1}^\infty \sum_{R \in \mathcal{H}^j_{-}(P)}\Bigl( (1-b_j)S_{P \to R}w_0(x) + b_j S_{P \to R}w_1(x) \Bigr).
    \]
    This definition can be pictured as follows: we put a certain weight (either $w_0$ or $w_1$ depending on $b_j$) in half of the dyadic children of $P$,
    then we again place a copy of $w_0$ or $w_1$ on one half of the first generation children of each of the remaining cubes from the previous step. This process is
    repeated inifinitely many times (thus exhausting the full cube $P$), and with either $w_0$ or $w_1$ in each step depending on the binary digit expansion of
    the number $\lambda$.

    Similarly, we define the set $E$ by
    \[
        \mathbbm{1}_E(x) = \sum_{j=1}^\infty \sum_{R \in \mathcal{H}^j_{-}(P)}\Bigl( (1-b_j)S_{P \to R}\mathbbm{1}_{E_0}(x) + b_j S_{P \to R}\mathbbm{1}_{E_1}(x) \Bigr).
    \]

    One can now check that this pair satisfies the required properties; see \cite{Rey2014} for a very similar construction.
\end{proof}

With this Lemma, we can now express the structure of the examples on the line $y = Q$ of $\Omega$ which lie between the points with coordinates $x=N^{-k}$. Indeed, let $(w_k,E_k)$ be the weight-set pair
constructed by Lemma \ref{BoundaryPointsDiscrete}. Then for any $x \in (N^{-k-1},N^{-k})$ we have
\[
    (w_x,E_x) := \mathcal{C}_{\lambda}((w_{k+1},E_{k+1}),(w_{k},E_k)),
\]
where
\[
    x = (1-\lambda)N^{-k-1} + \lambda N^{-k}.
\]

To extend to the rest of $\Omega$, let $(x,y) \in \Omega$ with $y < Q$. First assume that $y \leq 1 + (Q-1)x$; then we should use the previous Lemma with boundary on $x = 1$. Indeed
let
\[
    (w,E) = \mathcal{C}_{\lambda}((\mathbbm{1},\emptyset),(w^y,P)),
\]
where $\lambda = 1 + \frac{y-1}{x}$ and where $w^y$ is any dyadic $A_1$ weight with $[w]_{A_1^d} \leq Q$, $\langle w^y \rangle_P = y$ and $\operatorname{ess\,inf}_{z \in P}w(z) = 1$. This pair
clearly satisfies all the required estimates.

Now suppose that $y \geq 1 + (Q-1)x$ and let $(w_\ast,E_\ast)$ be the pair we just constructed on the line $y = Q$ with $x$-coordinate $x \frac{Q-1}{y-1}$. Then
\[
    (w,E) = \mathcal{C}_\lambda \bigl( (\mathbbm{1}, \emptyset), (w_\ast,E_\ast) \bigr),
\]
with $\lambda = x \frac{Q-1}{y-1}$ also satisfies all the required estimates.

\section{Verifying the Main Inequality} \label{CandidateBellmanFunctionSatisfiesMI}

We now have to show that the function $M$ that we found in the previous section satisfies all the required conditions which, we recall, are:
\begin{enumerate}
    \item $M$ is concave.
    \item The function $t \mapsto t M(x,y/t)$ is nonincreasing.
    \item For all $(x,y) \in \Omega$ and all $(\widetilde{x},\widetilde{y})$ in $\Omega$ with $\widetilde{x} \leq x$ and $Ny - (N-1)\widetilde{y} \geq Q$, we have
    \begin{equation}\label{IdontKnowYet}
        M(x,y) \geq \frac{N-1}{N}M(\widetilde{x},\widetilde{y}) + \frac{Ny - (N-1)\widetilde{y}}{NQ}M(Nx-(N-1)\widetilde{x},Q).
    \end{equation}
\end{enumerate}

It will be convenient to examine the function $f$, in particular observe that
\[
    f'(x) = (N\eta)^k,
\]
where $\eta = 1 - \frac{N-1}{NQ}$, whenever $x \in (N^{-k-1}, N^{-k})$.

The ratio $\eta N > 1$ whenever $Q > (N-1)/N$, which is always the case since $Q \geq 1$, hence $f$ is concave. Since $f$ is concave, it follows that $M$ must also be concave, since $M$ is
just the extension of $f$ by homogeneity to the subdomain of $\Omega$ which lies above the diagonal $y = 1+(Q-1)x$, and below this line the function is just the plane $z = x+y-1$.
A brief check now shows that $M$ is
indeed concave in $\Omega$. This proves (1).

Now we will show that the function
\[
    t \mapsto tM(x,y/t)
\]
is decreasing, thus proving (2).

To show this, note that we just need to prove $yM_y \geq M$ wherever $M$ is differentiable. This obviously holds for $y < 1+ (Q-1)x$, so it suffices to assume $y > 1+(Q-1)x$.
By homogeneity, we can translate this condition to one for $f$:
\[
    \frac{y}{Q-1}f\Bigl( x \frac{Q-1}{y-1} \Bigr) - \frac{xy}{y-1}f'\Bigl( x \frac{Q-1}{y-1} \Bigr) \geq \frac{y-1}{Q-1}f\Bigl( x \frac{Q-1}{y-1}\Bigr).
\]

Let $u = x \frac{Q-1}{y-1}$, then this inequality becomes
\[
    \frac{1}{u}f(u) - y f'(u) \geq 0
\]
for all $u \in [0,1]$ and all $y \in [1,Q]$. Since $f$ is increasing, this inequality is strongest when $y = Q$, so it suffices to show
\[
    f(u) \geq Qu f'(u).
\]

Recall that $f$ is piecewise linear, so let $u_0 = N^{-k-1}$ and $u_1 = N^{-k}$ and assume $u \in (u_0,u_1)$. The above inequality now becomes
\[
    f(u_0) + (u-u_0)f'(u_0+) \geq Qu f'(u_0+).
\]
Thus, we can reduce to showing
\[
    \frac{f(u_0)}{f'(u_0+)} \geq u_0 + (Q-1)u_1.
\]
But an easy computation, using the value of $f'$ computed before, yields that this inequality is equivalent to
\[
    \eta \geq 1 - \frac{N-1}{NQ},
\]
which is precisely the value of $\eta$ so we are done. This shows (2).

Finally, we are left with verifying (3). To do this we will construct a sequence of functions $M_k$ defined on $\Omega$, all of which satisfy (3) on a specific subset of $\Omega$. Define
\[
    \Omega_k = \{(x,y) \in \Omega:\, y \leq 1 + (Q-1)N^k x \}.
\]

\begin{figure}[!ht]
    \caption{Domains $\Omega_k$}
    \label{Figure:Domain2}
    \includegraphics[scale=0.6]{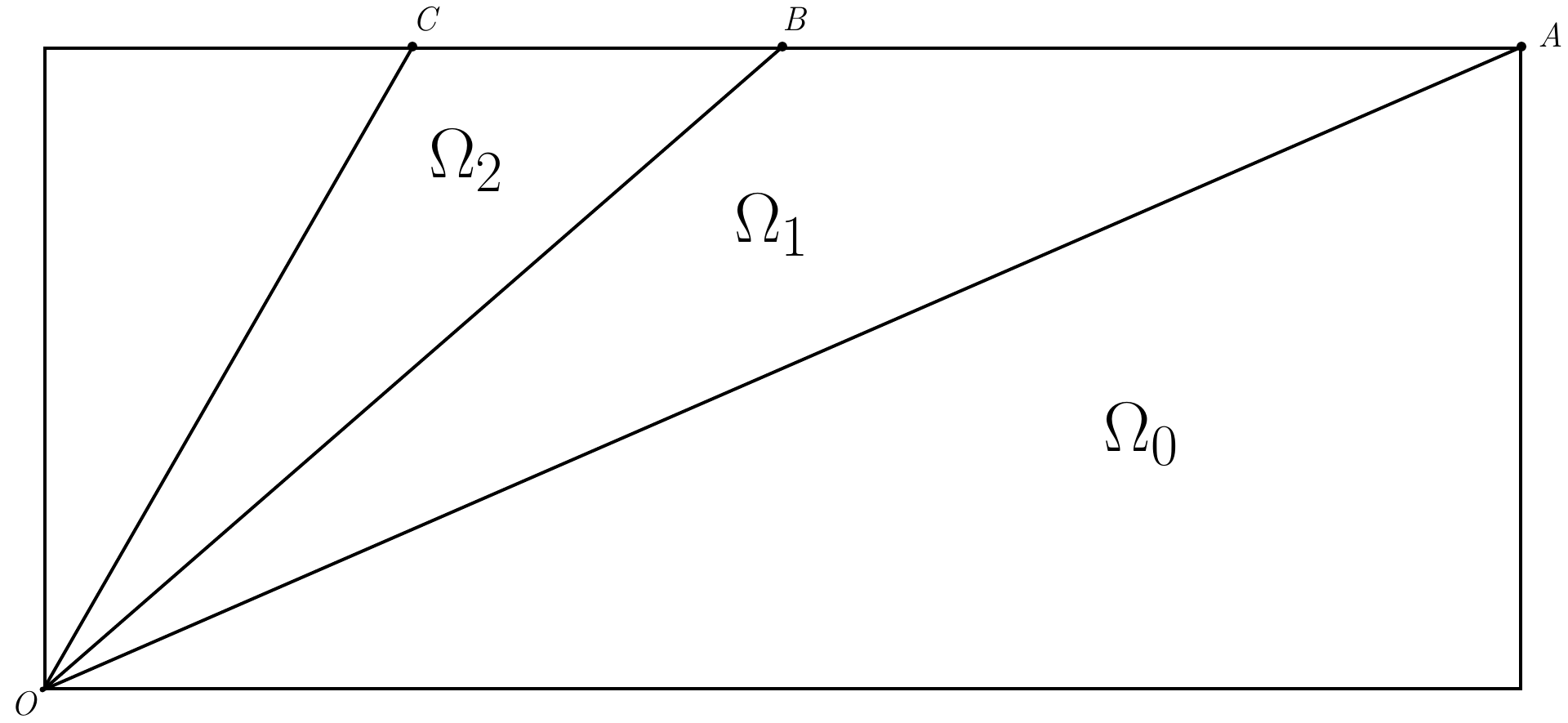}
\end{figure}
\autoref{Figure:Domain2} represents the first three of these domains (again, the diagram is not to scale). For example $\Omega_2$ is the subdomain of $\Omega$ which lies to the right of the line
joining $O$ and $C$.

We define $M_k$ to be the wedge formed by the $k$-th plane of $M$ on $\Omega\setminus\Omega_{k-1}$ and the $(k-1)$-th plane of $M$ on $\Omega_{k-1}$, that is:
\[ M_k(x,y) =
    \begin{cases}
        a_k x + b_k (y-1) & \text{if } (x,y) \in \Omega \setminus \Omega_{k-1} \\
        a_{k-1}x + b_{k-1} (y-1) & \text{if } (x,y) \in \Omega_{k-1}.
    \end{cases}
\]
where $M(x,y) = a_k x + b_k (y-1)$ on $\Omega_k \setminus \Omega_{k-1}$. One can give the explicit formulas for $a_k$ and $b_k$:
\[
    a_k = (N \eta)^k, \quad b_k = \eta^k.
\]

Obviously $M_0$ satisfies (3).

Fix any $(x,y) \in \Omega$, we can assume without loss of generality that $(x,y) \in \Omega_k$ for some $k$. Introduce the notation
\[
    x = \frac{N-1}{N} \widetilde{x} + \frac{1}{N}\widehat{x} \quad \text{and} \quad y = \frac{N-1}{N} \widetilde{y} + \frac{1}{N}\widehat{y}.
\]

Since $M$ is concave, we have that $M_k \geq M$ on $\Omega$ ($M_k$ is a ``supporting wedge'' of the graph of $M$).
Instead of (3) we will prove (under the same hypotheses)
\begin{equation} \label{StrongerMI}
    M_k(x,y) \geq \frac{N-1}{N}M_k(\widetilde{x},\widetilde{y}) + \frac{1}{N}\frac{\widehat{y}}{Q} M_k( \widehat{x}, Q),
\end{equation}
which, by the above remark, is a stronger statement.

We will first show that we can assume the point $(\widehat{x}, Q)$ to be in $\Omega_k$.
Indeed, suppose that $\widetilde{x}$ is so small that
$(\widehat{x},Q) \notin \Omega_k$, then
\begin{align*}
    \frac{\partial}{\partial \widetilde{x}}\Bigl( \text{Right hand side of }\eqref{StrongerMI} \Bigr) &= \frac{N-1}{N} a_k - \frac{N-1}{N} \frac{\widehat{y}}{Q}a_{k-1} \\
                                                                                                     &= \Bigl(\frac{N-1}{N}\Bigr) \Bigl( a_k - \frac{\widehat{y}}{Q}a_{k-1} \Bigr) \\
                                                                                                     &\geq \Bigl(\frac{N-1}{N}\Bigr) \Bigl( a_k - \frac{Ny-(N-1)}{Q}a_{k-1} \Bigr) \\
                                                                                                     &\geq \Bigl(\frac{N-1}{N}\Bigr) \Bigl( a_k - \frac{NQ-(N-1)}{Q}a_{k-1} \Bigr).
\end{align*}

Now recall that $a_k = (N \eta)^k$, so the partial derivative of the right hand side of equation \eqref{StrongerMI} is at least
\[
    \frac{N-1}{N}(N\eta)^{k-1}\Bigl( N\eta - \frac{NQ-(N-1)}{Q}\Bigr) = 0,
\]
so the right hand side is increasing, at least as long as $(\widehat{x},Q) \in \Omega_{k-1}$.

This allows us to assume that $\widetilde{x}$ is large enough to make $(\widehat{x},Q) \in \Omega_k$ (by continuity). Under this assumption the inequality becomes much easier since $M_k$ is
now being evaluated always on $\Omega_k$, and hence we can assume that $M_k$ itself is a plane. Now it is easy to check that the inequality is indeed true under these conditions.

To see this, observe that inequality \eqref{StrongerMI} can be written as:
\[
    ax + b(y-1) \geq \frac{N-1}{N}\bigl( a\widetilde{x} + b(\widetilde{y}-1) \bigr) + \frac{1}{N}\frac{\widehat{y}}{Q}\bigl( a\widehat{x} + b(Q-1) \bigr).
\]

We can reorganize this as:
\begin{align*}
    a\Bigl( x - \frac{N-1}{N} \widetilde{x} - \frac{1}{N}\frac{\widehat{y}}{Q} \widehat{x} \Bigr) + b\Bigl( y-1 - \frac{N-1}{N} \widetilde{y} + \frac{N-1}{N} - \frac{1}{N}\frac{\widehat{y}}{Q}(Q-1)  \Bigr) &\geq 0.
\end{align*}

This simplifies to showing
\[
    a\Bigl( \frac{\widehat{x}}{N} - \frac{\widehat{x}}{N}\frac{\widehat{y}}{Q} \Bigr) + b\Bigl( \frac{\widehat{y}}{NQ} - \frac{1}{N} \Bigr) \geq 0,
\]
which is equivalent to
\[
    \Bigl( \frac{\widehat{y}}{Q} -1 \Bigr)\bigl( b - a\widehat{x} \bigr) \geq 0.
\]

Since the assumptions force $\widehat{y}$ to be at least $Q$, we just need to check that $\widehat{x} \leq \frac{b}{a}$. But this is exactly the bound that is guaranteed from the considerations above
since $\frac{b}{a} = N^{-k}$.

\nocite{Vasyunin2003}
\bibliography{bibliography}{}
\bibliographystyle{abbrv}

\end{document}